\def\<{\langle}
\def\>{\rangle}
\def\BB{{\mathcal B}}
\def\LL{{\mathcal L}}
\def\TT{{\mathcal T}}
\def\bbC{\mathbb{C}}
\def\bbD{\mathbb{D}}
\def\bbT{\mathbb{T}}
\newcommand{\Ksp}[1]{{K^2_{#1}}}
\newcommand{\Ku}{{\Ksp{u}}}
\renewcommand{\implies}{\Rightarrow}
\newcommand{\To}{A^u}
\newcommand{\1}{\mathbf{1}}
\newtheorem{lemma}{Lemma}[section]
\newtheorem{theorem}[lemma]{Theorem}%[section]
\newtheorem{corollary}[lemma]{Corollary}
\newtheorem*{examples*}{Examples}
\begin{document}
\title{Commutation relations for truncated Toeplitz operators}
\date{}
\author{Isabelle Chalendar and Dan Timotin} 

\address{Isabelle Chalendar, Universit\'e de Lyon, CNRS UMR 5208, Universit\'e Lyon 1, Institut Camille Jordan, 43 blvd. du 11 Novembre 1918, F-69622 Villeurbanne Cedex,
   France}
   \email{chalendar@math.univ-lyon1.fr}

\address{Dan Timotin, Institute of Mathematics Simion Stoilow of the Romanian Academy, P.O. Box 1-764, Bucharest 014700,
   Romania}
   \email{Dan.Timotin@imar.ro}

%\CorrespondingAuthor{Dan Timotin}

\subjclass{47B32, 47B35, 47B37}
\keywords{truncated Toeplitz operators; normal operators; commutation properties}

\begin{abstract}
For truncated Toeplitz operators, which are compressions of multiplication operators to model subspaces of the Hardy space $H^2$, we obtain criteria for commutation relations. The results show an analogy to the case of Toeplitz matrices, and they extend the theory of Sedlock algebras.
\end{abstract}

\maketitle

\section{Introduction}

Truncated Toeplitz operators are compressions of multiplication operators to model subspaces of the Hardy space $H^2$; they represent a far reaching generalization of classical Toeplitz matrices.  Although particular case had appeared before in the literature, the general theory has been initiated in the seminal paper~\cite{Sa}. Since then, truncated Toeplitz operators have constituted  an active area of research. We mention only a few  relevant papers:~\cite{BBK,BCFMT,CGRW,CRW,GRW,STZ}; see also the recent  survey~\cite{GR} and the references within.

In particular, in~\cite{Se} Sedlock has investigated when a product of truncated Toeplitz operators is itself a truncated Toeplitz operator. It turns out that this does not happen very often. More precisely, there exists a family of classes $\BB^\alpha_u$ (precise definitions in the next section), where $\alpha$ is in the extended complex plane, such that, whenever the product of two nonscalar truncated Toeplitz operators is itself a truncated Toeplitz operator, both operators have to belong to the same class $\BB^\alpha_u$. These classes are commutative algebras, and they are the maximal subalgebras of the subspace of truncated Toeplitz operators.

On the other hand, truncated Toeplitz operators represent a far reaching generalization of classical Toeplitz matrices. Toeplitz matrices whose product is also a Toeplitz matrix are sometimes called generalized circulants~\cite{Da}, and a discussion of the classes $\BB^\alpha_u$ for this particular case appears in~\cite{Sh}. A uniform procedure for imposing conditions on products of Toeplitz matrices has been devised in~\cite{GP}, leading to characterizations of different classes of Toeplitz matrices: normal, unitary, commuting, etc.

The purpose of the present paper is to adapt the approach in~\cite{GP} to the general case of truncated Toeplitz operators on an arbitrary model space. The algebraic relations carry through neatly if we take advantage of a certain unitary operator between different model spaces, called the Crofoot transform. As a consequence, we obtain complete  characterizations of some classes of truncated Toeplitz operators defined by commutation relations.

The plan of the paper is the following. After a preliminary section, we introduce  the Sedlock classes in Section 3 and  the Crofoot transform in Section 4. Section 5 is dedicated to the key technical argument, which is analogous to the one in~\cite{GP}. The main results are then proved in Section 6.

\section{Preliminaries}
 
Our notations are mostly standard: $\bbC$  is the complex plane,  $\bbD=\{z\in\bbC:|z|<1\}$ the unit disc, and  $\bbT=\{z\in\bbC:|z|<1\}$ the unit circle. By $\hat\bbC$ we will denote the extended complex plane $\bbC\cup\{\infty\}$.
As is customary, we will view the Hardy space $H^2$ on $
\bbD$ as a subspace of $L^2(\bbT)$ by identifying functions analytic in $\bbD$ with their radial limits (almost everywhere). Similarly,   the algebra $H^\infty$ of bounded analytic functions in $\bbD$ may be viewed as a closed subalgebra of $L^\infty(\bbT)$.

An inner function $u\in H^\infty$ is characterized by $|u|=1$ almost everywhere on $\bbT$. If $u$ is an inner function and $a\in\bbD$, we define the inner function $u_a$ by
 \[
 u_a(z)=\frac{u(z)-a}{1-\bar a u(z)}.
 \]
 
 If $u$ is an inner function, the model space $\Ku$ is defined by $\Ku=H^2\ominus uH^2$. We denote by $P_{\Ku}$ the orthogonal projection (in $L^2(\bbT)$) onto $\Ku$.

 The conjugation of $L^2(\bbT)$ defined by $\tilde f=u\bar z\bar f $ bijectively maps $\Ku$ to itself; it is this latter restriction that will appear in the sequel.
The space $\Ku$ is a reproducing kernel space of analytic functions on $\bbD$, and the reproducing kernels for points $\lambda\in\bbD$ are 
 \[
 k^u_\lambda(z)=\frac{1-\overline{u(\lambda)}u(z)}{1-\bar\lambda z}.
 \]
The conjugate kernels $\tilde k^u_\lambda$ will also appear; an easy computation yields
 \[
 \tilde k^u_\lambda(z)=\frac{u(z)-u(\lambda)}{z-\lambda}.
 \]
 As shown in~\cite{AC}, in special cases one may have ``reproducing kernels'' for points $\zeta\in\bbT$. Namely, all functions in $\Ku$ have a nontangential limit $f(\zeta)$ in $\zeta\in\bbT$ precisely when $u$ has an angular derivative in the sense of Caratheodory in $\zeta$. In this case the function 
 \[
 k^u_\zeta(z)=\frac{1-\overline{u(\zeta)}u(z)}{1-\bar\zeta z}
 \]
belongs to $\Ku$, and $f(\zeta)=\<f,k_\zeta\>$ for $f\in \Ku$.
  
The truncated Toeplitz operators (TTO) are defined as follows. Note first that, since the reproducing kernels are bounded functions, $\Ku\cap H^\infty$ is dense in $\Ku$.  If $\phi\in L^2(\bbT)$, we consider the map $f\mapsto P_{\Ku}\phi f$ defined on $\Ku\cap H^\infty$. If this map extends to a bounded operator on $\Ku$, we denote it $\To_\phi$ and call it a truncated Toeplitz operator with symbol $\phi$. The set of all TTOs on $\Ku$ is a weakly closed subspace of $\LL(\Ku)$, that we will denote by $\TT_u$.

Truncated Toeplitz operators are closer to Toeplitz matrices than to Toeplitz operators. To start with, the symbol of a TTO is not uniquely defined; it is proved in~\cite{Sa} that $\To_\phi=0$ if and only if $\phi\in uH^2+\overline{uH^2}$. It would be tempting to speak about the uniquely defined \emph{reduced symbol} of a TTO $\To_\phi$ as the projection of $\phi$ onto $L^2\ominus (uH^2+\overline{uH^2}) $. This space can also be written as $(\Ku+\overline{\Ku}) \ominus \bbC(k^u_0-\bar k^u_0)$ (see~\cite{Sa,Se}); in particular, any TTO has a symbol in $\Ku+\overline{\Ku}$. Obviously things simplify when $k^u_0=\bar k^u_0$, which is equivalent to $u(0)=0$; we will have more to say  about this in Section~\ref{se:basic u(0)=0}.

It has been shown in~\cite[Theorem 4.1]{Sa} that TTOs may be characterized algebraically among operators on $\Ku$; the result is the following.
  
 \begin{lemma}\label{le:characterization of TTOs}
  The bounded operator $A$ on $\Ku$ belongs to $\TT_u$ if and only if there are functions $\psi,\chi\in\Ku$ such that
  \[
   \Delta(A):=A-S_uA S_u^* =(\psi\otimes k^u_0)+ (k^u_0\otimes \chi),
  \]
  in which case $A=\To_{\psi+\bar\chi}$.
 \end{lemma}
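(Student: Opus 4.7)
The plan is to handle the two implications separately, the forward direction by direct calculation and the converse by a uniqueness argument. For the forward direction, assume $A = \To_\phi$. By the discussion preceding the lemma, every TTO admits a symbol in $\Ku + \overline{\Ku}$, so I may take $\phi = \psi + \bar\chi$ with $\psi, \chi \in \Ku$. I would use two ingredients: first, that for every $f \in \Ku$ one has $S^* f \in \Ku$ and hence $S_u^* f = (f - f(0))/z$ (so $S_u^*$ is simply the restriction of the backward shift from $H^2$); second, the rank-one identity $I - S_u S_u^* = k^u_0 \otimes k^u_0$ on $\Ku$. Pairing on the dense subspace $\Ku \cap H^\infty$,
\[
\langle \Delta(A) f, g\rangle = \langle \phi f, g\rangle_{L^2} - \bigl\langle \phi\,(f - f(0))/z,\ (g - g(0))/z\bigr\rangle_{L^2},
\]
and using $|z|=1$ on $\bbT$ to simplify the second term to $\langle \phi(f-f(0)),\, g - g(0)\rangle$, the expansion, together with $f(0) = \langle f, k^u_0\rangle$ and $g(0) = \langle g, k^u_0\rangle$, should repackage into the desired identity $\Delta(A) = \psi \otimes k^u_0 + k^u_0 \otimes \chi$.

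For the converse, suppose $A \in \LL(\Ku)$ satisfies the displayed rank-two condition with given $\psi, \chi \in \Ku$. Set $A' := \To_{\psi + \bar\chi}$; by the forward direction, $\Delta(A') = \Delta(A)$, so $B := A - A'$ satisfies $B = S_u B S_u^*$ and, inductively, $B = S_u^n B (S_u^*)^n$ for every $n \ge 0$. Since the backward shift $S^*$ on $H^2$ is strongly stable and $S_u^* = S^*|_{\Ku}$, the iterates $(S_u^*)^n$ tend strongly to $0$ on $\Ku$, which forces $\|Bf\| \le \|B\|\,\|(S_u^*)^n f\| \to 0$ for every $f$. Thus $B = 0$, which gives $A = \To_{\psi + \bar\chi}$ and simultaneously identifies the symbol announced in the lemma.

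The main technical step is the forward computation: one has to verify that the constant contributions $\psi(0)$, $\chi(0)$ and the cross-terms arising from $\phi = \psi + \bar\chi$ combine so as to produce exactly two rank-one tensors pinned at $k^u_0$, with no residual terms. Once that identity is established, the converse direction is essentially automatic from the strong stability of $S_u^*$ on $\Ku$.
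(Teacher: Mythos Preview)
The paper does not prove this lemma at all: it is quoted verbatim from Sarason~\cite[Theorem~4.1]{Sa}, so there is no ``paper's own proof'' to compare against. Your task is therefore simply to give a correct proof, and your outline is essentially sound---both the forward computation and the uniqueness argument via strong stability of $S_u^*$ are the standard ingredients. The algebraic cancellation you anticipate in the forward direction does occur: expanding $\langle\phi f,g\rangle-\langle\phi(f-f(0)),g-g(0)\rangle$ with $\phi=\psi+\bar\chi$, the cross terms involving $\psi(0)$ and $\overline{\chi(0)}$ cancel exactly, leaving $\overline{g(0)}\langle f,\chi\rangle+f(0)\langle\psi,g\rangle$, which is precisely $\langle(\psi\otimes k^u_0+k^u_0\otimes\chi)f,g\rangle$.

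There is one genuine soft spot in the converse. You write ``set $A':=\To_{\psi+\bar\chi}$'' and then treat $B=A-A'$ as a bounded operator to which the iteration $B=S_u^nB(S_u^*)^n$ applies. But in the paper's conventions $\To_\phi$ is only defined when the densely defined compression extends boundedly, and for a generic $\psi,\chi\in\Ku$ the symbol $\psi+\bar\chi$ is merely in $L^2$, so boundedness of $A'$ is exactly what is at stake. The fix is to run the argument at the level of the sesquilinear form on the dense set $\Ku\cap H^\infty$: for $f,g$ there, both $\langle Af,g\rangle$ and $\langle\phi f,g\rangle$ satisfy the same recursion, and since $(S_u^*)^n f\to0$ in $\Ku$ while $\|(S_u^*)^n f\|_\infty\le\|f\|_\infty$ stays bounded, one gets $\langle Af,g\rangle=\langle\phi f,g\rangle$ on the dense set. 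This simultaneously proves that $A'$ is bounded (it equals $A$) and that $A\in\TT_u$. With that adjustment your argument goes through.
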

 
 \begin{examples*}
 
 \begin{enumerate}
  \item If  $\phi(z)=z$, then $\To_\phi$ is the \emph{model operator}~\cite{Ni,SN} on the space $\Ku$; it will be denoted by $S_u$.
  
  \item In~\cite{Sa} are identified all rank one operators in $\TT_u$: they are multiples of $k^u_\lambda\otimes\tilde k^u_\lambda$ and of their adjoints 
 $\tilde k^u_\lambda\otimes k^u_\lambda$, to which are added multiples of $k^u_\zeta\otimes k^u_\zeta$ whenever $u$ has an angular derivative in the sense of Caratheodory in $\zeta\in\bbT$.
 \item  For $\alpha\in\bbD$ the \emph{modified compressed shift}s are defined by
\[
S_u^\alpha=S_u+\frac{\alpha}{1-\alpha\overline{u(0)}}\, k^u_0\otimes\tilde k^u_0.
\]

If $\alpha\in\bbD$, then $S_u^\alpha$ is unitarily equivalent to $S_{u_\alpha}$, and is thus
a completely non-unitary contraction (whose characteristic function, in the sense of Sz.Nagy--Foias~\cite{SN}, is  $u_\alpha$). If $\alpha\in\bbT$, then $S_u^\alpha$ is unitary, with singular spectral measure and multiplicity one (these are precisely the Clark unitary operators defined in~\cite{Cl}).
 \end{enumerate}

\end{examples*}

\section{Sedlock classes}\label{se:sedlock}

 The Sedlock classes  $\BB_u^\alpha\subset \TT_u$, with $\alpha\in\hat\bbC$, have been introduced in~\cite{Se} 
 in connection to multiplication properties of TTOs.  For $\alpha\in\bbC$,  
  $\BB_u^\alpha$ is  the set of operators in $\TT_u$ which have a symbol of the form $\phi+\alpha\overline{S_u\tilde\phi}+c$, where $\phi\in\Ku$ and $c\in\bbC$; while, for $\alpha=\infty$, $\BB_u^\infty$ is the set of TTOs which have an antiholomorphic symbol. The following are the main results proved in~\cite{Se}.
 
 \begin{theorem}\label{th:sedlock}
  {\rm(i)} For any $\alpha\in\hat\bbC$, $\BB_u^\alpha$ is a commutative weakly closed algebra. 
  
    {\rm(ii)} If $\alpha\not=\alpha'$, then $\BB_u^\alpha\cap\BB_u^{\alpha'}=\bbC I $.
  
    {\rm(iii)} $A\in \BB_u^\alpha$ if and only if $A^*\in \BB_u^{1/\bar\alpha}$.
  
    {\rm(iv)} If $\alpha\in\overline{\bbD}$, then $\BB_u^\alpha=\{ S_u^\alpha \}'$ (the commutant of $S_u^\alpha$).
  
    {\rm(v)} If $A,B\in \TT_u$, then $AB\in\TT_u$ if and only if either one of the operators is a scalar, or both belong to the same class $\BB_u^\alpha$ for some $\alpha\in\hat\bbC$. In the last case we also have  $AB\in\BB_u^\alpha$.
  
    {\rm(vi)} The classes $\BB_u^\alpha$ are precisely the maximal subalgebras of $\TT_u$.
 \end{theorem}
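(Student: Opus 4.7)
The plan is to build on the characterization of TTOs from Lemma~\ref{le:characterization of TTOs} and on the special role of the modified compressed shifts $S_u^\alpha$. The cornerstone is item (iv): once $\BB_u^\alpha = \{S_u^\alpha\}'$ is established for $\alpha \in \overline{\bbD}$, the commutativity in (i) follows for those $\alpha$ because the commutant of $S_u^\alpha$ is commutative. Indeed, for $\alpha \in \bbD$ the operator $S_u^\alpha$ is unitarily equivalent to the model operator $S_{u_\alpha}$, whose commutant is the classical $H^\infty$-functional calculus; for $\alpha \in \bbT$, $S_u^\alpha$ is a unitary with simple singular spectrum, and its commutant is the abelian von Neumann algebra it generates. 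For $\alpha = \infty$, a direct check using the compression calculus gives $\To_{\bar\phi}\, \To_{\bar\psi} = (\To_\psi \To_\phi)^* = \To_{\overline{\psi\phi}}$ for $\phi, \psi \in \Ku \cap H^\infty$, so antiholomorphic symbols form a commutative subalgebra.

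For (iv) itself, I would first verify $\BB_u^\alpha \subset \{S_u^\alpha\}'$ by a bracket computation using the explicit form $\phi + \alpha \overline{S_u\tilde\phi} + c$ of the symbol together with the formulas for rank-one TTOs recalled in the Examples. For the reverse inclusion, I would transport through the unitary equivalence $S_u^\alpha \simeq S_{u_\alpha}$: the commutant of a model operator $S_v$ consists of bounded analytic functions of $S_v$, and translating such a $K^2_{u_\alpha}$-symbol back to $\Ku$ produces precisely a Sedlock symbol, modulo an additive scalar. Item (iii) is a short algebraic check: the adjoint of $\To_{\phi + \alpha \overline{S_u\tilde\phi}+c}$ has symbol $\bar\phi + \bar\alpha\, S_u\tilde\phi + \bar c$; substituting $\phi' := \bar\alpha\, S_u\tilde\phi$ rewrites this in Sedlock form with parameter $1/\bar\alpha$, using that the map $\phi \mapsto S_u\tilde\phi$ is an involution of $\Ku$ modulo the indeterminacy of the symbol. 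Item (ii) uses the uniqueness of the reduced symbol in $(\Ku + \overline{\Ku}) \ominus \bbC(k^u_0 - \bar k^u_0)$: equating two representations with $\alpha \neq \alpha'$ and projecting the difference onto $\Ku$ and $\overline{\Ku}$ forces $\phi = \phi' = 0$, so only scalars remain.

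The main obstacle is (v). My plan is to apply the $\Delta$-calculus of Lemma~\ref{le:characterization of TTOs} to the product $AB$. Using the identity $S_u^* S_u = I - k^u_0 \otimes k^u_0$, one rewrites $\Delta(AB) = AB - S_u AB S_u^*$ as $\Delta(A)B$ plus further rank-two terms built from the pieces of $\Delta(A), \Delta(B)$ and from the vectors $A^* k^u_0$, $B k^u_0$. Writing $\Delta(A) = \psi_A \otimes k^u_0 + k^u_0 \otimes \chi_A$ and similarly for $B$, the requirement that $\Delta(AB)$ again have the rank-two shape $\psi \otimes k^u_0 + k^u_0 \otimes \chi$ yields a small system of vector equations. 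Solving it should force either one of $A,B$ to be scalar, or the existence of a common $\alpha \in \hat\bbC$ aligning $\psi_A, \chi_A, \psi_B, \chi_B$ into the Sedlock form, with $\alpha = \infty$ corresponding to the antiholomorphic case. The linear-algebraic bookkeeping here is where I expect the argument to be delicate, since the value of $\alpha$ has to emerge intrinsically as a ratio of coefficients and must be independent of the pair. Once (v) is in hand, (vi) is formal: any subalgebra of $\TT_u$ containing two non-scalar operators is forced by (v) into a single $\BB_u^\alpha$, and by (i) each $\BB_u^\alpha$ is itself a subalgebra, hence maximal.
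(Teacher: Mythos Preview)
The paper does not prove Theorem~\ref{th:sedlock}: it is stated as a summary of ``the main results proved in~[Se]'' and no argument is given. So there is no proof in this paper to compare your proposal against.

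That said, a brief remark on your sketch. Your strategy for (v) --- compute $\Delta(AB)$ via Lemma~\ref{le:characterization of TTOs}, extract rank-two conditions, and solve for a common parameter $\alpha$ --- is precisely the machinery this paper builds in Section~\ref{se:basic u(0)=0} (Lemma~\ref{le:basic lemma} and Theorem~\ref{th:basic theorem}), after first reducing to $u(0)=0$ via the Crofoot transform. The paper applies that machinery to commutation, normality, and unitarity rather than to re-derive Sedlock's (v), but the computation you outline is essentially Theorem~\ref{th:basic theorem} with $\zeta=\eta$ scalar, and the resulting equation $\phi_+\otimes\psi_- = \breve\phi_-\otimes\breve\psi_+$ does force either a common Sedlock parameter or triviality. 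One gap in your outline: for (i) you cover $\alpha\in\overline{\bbD}$ via (iv) and $\alpha=\infty$ directly, but you do not say how to handle $1<|\alpha|<\infty$; this is where (iii) is needed, and you should note that (iii) must be established before (i) is complete. Also, in (iii) the claim that $\phi\mapsto S_u\tilde\phi$ is an involution on $\Ku$ is not quite right as stated (it fails on constants); the correct bookkeeping again works cleanly only after reducing to $u(0)=0$, where the reduced conjugation~\eqref{eq:def of involution} on $z K^2_{u_1}$ does the job.
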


As the algebras $\BB^\alpha_u$ are the commutants of modified compressed shifts, they may be given a more concrete description. This is done in~\cite[Section 6]{Se}, and we present below a brief summary of the results therein. There are basically two distinct types of  Sedlock classes, depending on whether $|\alpha|=1$ or not, and the case $|\alpha|>1$ is reduced to $|\alpha|<1$ by taking adjoints.

\begin{enumerate}
\item
If $|\alpha|=1$, then $S^\alpha_u$ is a unitary operator  of multiplicity one, with singular spectral measure $\mu_\alpha$. Thus $\BB^\alpha_u=\{S^\alpha_u\}'$
is a maximal abelian subalgebra of $\LL(\Ku)$, and its elements may be described as functions $\Phi(S^\alpha_u)$ with $\Phi\in L^\infty(\mu_\alpha)$.

\item
 If $|\alpha|\not=1$, suppose first that $|\alpha|<1$. Then $S^\alpha_u$ is a completely nonunitary contraction, that has a functional calculus with functions in $H^\infty$~\cite{SN}. Its commutant $\BB^\alpha_u$ is a weakly closed nonselfadjoint algebra; its elements are the functions $\Psi(S^\alpha_u)$ with $\Psi\in H^\infty$, and we may identify their symbols as TTOs by the formula 
 \[
 \Psi(S^\alpha_u)=\To_{\frac{\Psi}{1-\alpha \bar u}}.
 \]
 
If $|\alpha|>1$, then $S^{1/\bar\alpha}_u$ is a completely nonunitary contraction, and using Theorem~\ref{th:sedlock} (iii) the elements of $\BB^\alpha_u$ may be described as
\[
\Psi(S^{1/\bar\alpha}_u)^*=\To_{\frac{\alpha\bar\phi}{\alpha-u}}
\]
for $ \Psi\in H^\infty $.
\end{enumerate}

It is worth mentioning the following simple corollary, which determines when the product of two TTOs is zero.

\begin{corollary}\label{co:product=0}
If $\To_\phi, \To_\psi$ are nonzero operators in $\TT_u$ and $\To_\phi \To_\psi=0$, then there is $\alpha\in\hat\bbC$ such that $\To_\phi, \To_\psi\in \BB^\alpha_u$. Moreover:
\begin{enumerate}
\item
If $|\alpha|=1$, then $\To_\phi=\Phi(S^\alpha_u)$, $\To_\psi=\Psi(S^\alpha_u)$, with $\Phi,\Psi\in L^\infty(\mu_\alpha)$ and $\Phi\Psi=0$ $\mu_\alpha$-almost everywhere.

\item
If $|\alpha|<1$, then $\To_\phi=\Phi(S^\alpha_u)$, $\To_\psi=\Psi(S^\alpha_u)$, with $\Phi,\Psi\in H^\infty$, and the inner function $u_\alpha$ divides $\Phi\Psi$.

\item
If $|\alpha|>1$, then $\To_\phi=\Phi(S^{1/\bar\alpha}_u)^*$, $\To_\psi=\Psi(S^{1/\bar\alpha}_u)^*$, with $\Phi,\Psi\in H^\infty$, and the inner function $u_{1/\bar\alpha}=\frac{1-\bar\alpha u}{u-\alpha}$ divides $\Phi\Psi$.
\end{enumerate}
\end{corollary}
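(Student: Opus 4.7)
The plan is to combine Theorem~\ref{th:sedlock}(v) with the functional-calculus descriptions of the Sedlock classes summarized just before the corollary; the three enumerated cases will then follow by a routine translation.

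First I would observe that neither $\To_\phi$ nor $\To_\psi$ can be a nonzero scalar multiple of $I$, since a nonzero scalar cannot annihilate a nonzero operator. Thus both are nonscalar TTOs whose product, $0$, lies in $\TT_u$, and Theorem~\ref{th:sedlock}(v) forces both operators to lie in the same Sedlock class $\BB^\alpha_u$ for some $\alpha \in \hat\bbC$. This establishes the first assertion.

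To handle case (1), when $|\alpha|=1$, I would use that $S^\alpha_u$ is a cyclic unitary with scalar spectral measure $\mu_\alpha$, so the map $\Phi \mapsto \Phi(S^\alpha_u)$ is a $*$-isomorphism of $L^\infty(\mu_\alpha)$ onto $\BB^\alpha_u$. Writing $\To_\phi = \Phi(S^\alpha_u)$ and $\To_\psi = \Psi(S^\alpha_u)$, the product equals $(\Phi\Psi)(S^\alpha_u)$, which vanishes precisely when $\Phi\Psi = 0$ $\mu_\alpha$-a.e. For case (2), $|\alpha|<1$, the operator $S^\alpha_u$ is unitarily equivalent to $S_{u_\alpha}$, a CNU contraction with characteristic function $u_\alpha$; its $H^\infty$-functional calculus is a continuous algebra homomorphism whose kernel consists exactly of the $H^\infty$-multiples of $u_\alpha$. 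The same multiplicativity argument as in case (1) then produces the divisibility condition $u_\alpha \mid \Phi\Psi$.

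For case (3), when $|\alpha|>1$, I would pass to adjoints: $\To_\phi \To_\psi = 0$ yields $\To_\psi^* \To_\phi^* = 0$, and by Theorem~\ref{th:sedlock}(iii) both $\To_\phi^*$ and $\To_\psi^*$ lie in $\BB^{1/\bar\alpha}_u$ with $|1/\bar\alpha|<1$, so case (2) applies to this pair. Unwinding the convention that elements of $\BB^\alpha_u$ in this regime are written as $\Phi(S^{1/\bar\alpha}_u)^*$ gives the stated form; one only needs to note that $(1-\bar\alpha u)/(u-\alpha)$ differs from $u_{1/\bar\alpha}$ by a unimodular constant, which is irrelevant for divisibility. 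The only substantive input throughout is Theorem~\ref{th:sedlock}(v); the main potential pitfall, as expected, is the bookkeeping in case (3), tracking adjoints and the Möbius factor.
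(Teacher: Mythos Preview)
Your proposal is correct and follows essentially the same route as the paper: invoke Theorem~\ref{th:sedlock}(v) to place both operators in a common Sedlock class, then read off the three cases from the functional-calculus descriptions of $\BB^\alpha_u$ given just before the corollary. The paper's proof is terser (it simply calls most statements ``immediate consequences of the remarks above'' and singles out only the kernel identification $h(S^\alpha_u)=0 \iff u_\alpha \mid h$ in case~(ii)), but your more explicit treatment, including the adjoint reduction in case~(3) and the unimodular-constant remark, matches it in substance.
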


\begin{proof}
Most of the statements are immediate consequences of the remarks above. For point (ii), one should note that if $h\in H^\infty$ and $h(S^\alpha_u)=0$, then $u_\alpha$ divides $h$. This is proved directly in~\cite[Section 6]{Se}; alternately, it  follows from the fact, noted above, that the characteristic function of $S^\alpha_u$ is 
$u_\alpha$.
\end{proof}
 
 We end this section with a continuity property of Sedlock classes.
 
 \begin{lemma}\label{le:sedlock continuity}
Suppose $\alpha_n, \alpha\in\bbC$, $\alpha_n\to \alpha$, $A_n\in\BB_u^{\alpha_n}$, and $A_n\to A$. Then $A\in\BB_u^\alpha$.
\end{lemma}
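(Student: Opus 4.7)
My plan is to exploit Theorem~\ref{th:sedlock}(iv), which identifies $\mathcal{B}_u^\beta$ with the commutant $\{S_u^\beta\}'$ for every $\beta\in\overline{\mathbb{D}}$, together with the observation that the explicit formula
\[
S_u^\beta=S_u+\frac{\beta}{1-\beta\overline{u(0)}}\, k^u_0\otimes\tilde k^u_0
\]
makes $\beta\mapsto S_u^\beta$ continuous in operator norm on all of $\overline{\mathbb{D}}$: the denominator never vanishes there, since one may assume $|u(0)|<1$ (the alternative forces $K^2_u=\{0\}$ and the statement becomes vacuous). Granted these two ingredients, the core idea is that if $A_n$ commutes with $S_u^{\alpha_n}$ for all large $n$, and if $S_u^{\alpha_n}\to S_u^\alpha$ in norm while $A_n\to A$, then joint continuity of multiplication --- with one factor converging in norm --- forces $A$ to commute with $S_u^\alpha$, hence $A\in\mathcal{B}_u^\alpha$.

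The case $|\alpha|<1$ is then immediate: eventually $|\alpha_n|<1$, so the commutant characterization applies to each $A_n$, and I just pass to the limit in $A_nS_u^{\alpha_n}=S_u^{\alpha_n}A_n$. The case $|\alpha|>1$ I would handle by taking adjoints: Theorem~\ref{th:sedlock}(iii) translates the problem to the sequence $(A_n^*)$, which eventually lies in $\mathcal{B}_u^{1/\bar\alpha_n}$ with $|1/\bar\alpha_n|<1$ and $1/\bar\alpha_n\to 1/\bar\alpha\in\mathbb{D}$, so the first case yields $A^*\in\mathcal{B}_u^{1/\bar\alpha}$ and another application of Theorem~\ref{th:sedlock}(iii) gives $A\in\mathcal{B}_u^\alpha$.

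The main obstacle, and the only one, is the boundary case $|\alpha|=1$: the sequence $(\alpha_n)$ may hop back and forth across the unit circle, and the commutant characterization is only at our disposal when $|\alpha_n|\le 1$. My plan here is to split $(\alpha_n)$ into the two subsequences determined by $|\alpha_n|\le 1$ and $|\alpha_n|>1$, and extract an infinite one; the subsequential limit of $(A_n)$ is still $A$. Along the first, I pass to the limit in $A_nS_u^{\alpha_n}=S_u^{\alpha_n}A_n$ directly, using that $\alpha\in\mathbb{T}\subset\overline{\mathbb{D}}$ keeps us in the domain where Theorem~\ref{th:sedlock}(iv) and norm-continuity of $\beta\mapsto S_u^\beta$ are both valid. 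Along the second, I take adjoints and use $1/\bar\alpha_n\to 1/\bar\alpha\in\mathbb{T}\subset\overline{\mathbb{D}}$ to conclude $A^*\in\mathcal{B}_u^{1/\bar\alpha}$, hence $A\in\mathcal{B}_u^\alpha$. Either way the limit lands in $\mathcal{B}_u^\alpha$, and the lemma follows.
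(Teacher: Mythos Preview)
Your proposal is correct and follows essentially the same route as the paper: use Theorem~\ref{th:sedlock}(iv) together with the norm continuity of $\beta\mapsto S_u^\beta$ on $\overline{\bbD}$ to pass to the limit in the commutation relation, and invoke Theorem~\ref{th:sedlock}(iii) to reduce the case $|\alpha|>1$ to the closed disk by taking adjoints. In fact your treatment of the boundary case $|\alpha|=1$, where you split $(\alpha_n)$ into subsequences according to whether $|\alpha_n|\le 1$ or $|\alpha_n|>1$, is more careful than the paper's own proof, which passes over this point rather briskly.
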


\begin{proof}
For $\alpha_n, \alpha\in \bar\bbD$ the result follows from Theorem~\ref{th:sedlock} (iv), once we note that $\alpha_n\to \alpha$ implies $S_u^{\alpha_n}\to S_u^\alpha$. If $\alpha\not\in\bbD$, we use Theorem~\ref{th:sedlock} (iii) to reduce it to the previous case.
\end{proof}
 
In the sequel we will usually assume that $\alpha\in\bbC$; the obvious modifications of the arguments required when $\alpha=\infty$ are left to the reader. 
 
 \section{The Crofoot transform}
 
Let $u$ be an inner function and $a\in\bbD$.  The Crofoot transform $J=J(u,a)$ is the unitary operator $J:\Ku\to\Ksp{u_a}$ defined by
 \[
  J(f)=\frac{\sqrt{1-|a|^2}}{1-\bar a u} f.
 \]
It is proved in~\cite[Theorem 13.2]{Sa} that 
\begin{equation}\label{eq:crofoot1}
 J\TT_u J^*=\TT_{u_a}.
\end{equation}

The next result could be obtained by tedious calculations, but we prefer  a shorter argument based on the  previous section. 

\begin{theorem}\label{le:crofoot}
 If $\alpha\in\hat\bbC$, then  $J\BB_u^\alpha J^*=\BB_u^{\beta}$, where $\beta=\frac{\alpha-a}{1-\bar a\alpha}$.
\end{theorem}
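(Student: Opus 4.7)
My plan is, for $\alpha\in\bar\bbD$, to reduce the statement via the commutant characterization of Theorem~\ref{th:sedlock}~(iv) to the single operator identity $JS_u^\alpha J^*=S_{u_a}^\beta$ (with $J=J(u,a)$ and $\beta=\frac{\alpha-a}{1-\bar a\alpha}$); to establish that identity using the composition behaviour of Crofoot transforms; and to handle $\alpha\in\hat\bbC\setminus\bar\bbD$ via the adjoint relation Theorem~\ref{th:sedlock}~(iii).

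The reduction is immediate: since $J$ is unitary, conjugation by $J$ commutes with the formation of commutants in $\LL(\Ku)$, so by Theorem~\ref{th:sedlock}~(iv),
\[
J\BB_u^\alpha J^* = J\{S_u^\alpha\}'J^* = \{JS_u^\alpha J^*\}'.
\]
Once the intertwining identity $JS_u^\alpha J^*=S_{u_a}^\beta$ is established, the right-hand side equals $\BB_{u_a}^\beta$ by another application of Theorem~\ref{th:sedlock}~(iv) (to $u_a$ and $\beta$).

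The key ingredient for the intertwining identity is a composition law for Crofoot transforms: a direct multiplication computation shows that
\[
J(u_a,b)\,J(u,a)=\gamma(a,b)\,J(u,c)
\]
for some unimodular scalar $\gamma(a,b)$ and $c=\frac{a+b}{1+\bar a b}$; this rests on the algebraic identity $|1+\bar a b|^2-|a+b|^2=(1-|a|^2)(1-|b|^2)$ together with the Möbius composition $(u_a)_b=(\text{unimodular constant})\cdot u_c$. Specialising $b=\beta$ gives $c=\alpha$. Combining this with the fact, implicit in the Examples, that $J(u,\alpha)$ realises the unitary equivalence $S_u^\alpha\simeq S_{u_\alpha}$ (and similarly for $u_a,\beta$), I compute
\[
J(u_a,\beta)(JS_u^\alpha J^*)J(u_a,\beta)^* = J(u,\alpha)\,S_u^\alpha\,J(u,\alpha)^* = S_{u_\alpha}
\]
and
\[
J(u_a,\beta)\,S_{u_a}^\beta\,J(u_a,\beta)^* = S_{(u_a)_\beta} = S_{u_\alpha}.
\]
Cancelling the outer unitary $J(u_a,\beta)$ yields $JS_u^\alpha J^*=S_{u_a}^\beta$, as required.

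Finally, for $\alpha\in\hat\bbC\setminus\bar\bbD$ I apply Theorem~\ref{th:sedlock}~(iii): a short Möbius verification shows that the substitution $\alpha\mapsto 1/\bar\alpha$ is compatible with $\beta\mapsto 1/\bar\beta$, so the case $1/\bar\alpha\in\bbD$ already handled transfers. The main obstacle throughout is the Crofoot composition identity: although conceptually clean, the bookkeeping for the unimodular scalar $\gamma$ and for the constant relating $(u_a)_b$ to $u_c$ requires care.
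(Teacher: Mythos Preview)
Your route is genuinely different from the paper's. The paper argues indirectly: it first observes via maximality (Theorem~\ref{th:sedlock}~(vi)) that $J\BB_u^\alpha J^*$ must be \emph{some} $\BB_{u_a}^\beta$, then pins down $\beta$ by tracking a single rank-one TTO $\tilde k^u_\lambda\otimes k^u_\lambda\in\BB_u^{u(\lambda)}$ under $J$, extends from $u(\bbD)$ to $\bar\bbD$ by density and Lemma~\ref{le:sedlock continuity}, and finishes with adjoints. You instead go straight for the operator identity $JS_u^\alpha J^*=S_{u_a}^\beta$ via a composition law for Crofoot transforms, which is structurally cleaner and avoids the rank-one and density arguments altogether.

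Two points need tightening. First, the linchpin $J(u,\alpha)S_u^\alpha J(u,\alpha)^*=S_{u_\alpha}$ is neither proved nor stated in this paper; the Examples assert only that $S_u^\alpha$ is unitarily equivalent to $S_{u_\alpha}$, without naming the intertwining unitary. The identity is true (it is the content of Crofoot's original result, and a direct computation), but you must establish it independently rather than call it ``implicit''---otherwise the argument has no foundation here. Second, your composition argument invokes $J(u,\alpha)$ and $J(u_a,\beta)$, which are defined only for $\alpha,\beta\in\bbD$; when $|\alpha|=1$ these Crofoot transforms do not exist, so your derivation of $JS_u^\alpha J^*=S_{u_a}^\beta$ breaks down on the circle. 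The fix is immediate---the identity extends from $\bbD$ to $\bar\bbD$ by continuity of $\alpha\mapsto S_u^\alpha$---but it should be said explicitly, since your outline claims to cover all of $\bar\bbD$ in one stroke.
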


\begin{proof}
Since $\BB_u^\alpha$ is a maximal subalgebra of $\TT_u$, it follows from~\eqref{eq:crofoot1} that $J\BB_u^\alpha J^*$ is a maximal algebra of $\TT_{u_a}$, and thus, by Theorem~\ref{th:sedlock}, it must be equal to $\BB_{u_a}^\beta$ for some $\beta\in\hat\bbC$. To obtain the precise value of $\beta$, it is enough to look at the Crofoot transform of a single nonscalar operator; this we will do in the sequel. We may assume that $\dim \Ku>1$, since otherwise there is nothing to prove.

Suppose first that $|\alpha|<1$. It is shown in~\cite[Example 5.3]{Se} that for any $\lambda\in\bbD$ the rank one operator $\tilde k^u_\lambda\otimes k^u_\lambda$ belongs to $\BB_u^{u(\lambda)}$; also, $\tilde k^u_\lambda\otimes k^u_\lambda$ is not scalar since $\dim\Ku>1$.

If $f\in\Ksp{u}$, then
\[
f(\lambda)=
\frac{1-\bar a u(\lambda)}{\sqrt{1-|a|^2}} (Jf)(\lambda)=
 \<Jf,\frac{1- a \overline{u(\lambda)}}{\sqrt{1-|a|^2}} k^{u_a}_\lambda\>
=\<f,\frac{1- a \overline{u(\lambda)}}{\sqrt{1-|a|^2}}J^* k^{u_a}_\lambda\>.
\]
Therefore $J^* k^{u_a}_\lambda$ is a multiple of $k^u_\lambda$, or, equivalently, $ k^{u_a}_\lambda$ is a multiple of $Jk^u_\lambda$. Since $J$ commutes with the respective conjugations on $\Ksp{u}$ and $\Ksp{u_a}$, the conjugate kernel $ \tilde k^{u_a}_\lambda$ is a multiple of $J\tilde k^u_\lambda$. Therefore $J(\tilde k^u_\lambda\otimes k^u_\lambda)J^*$ is a multiple of $\tilde k^{u_a}_\lambda\otimes k^{u_a}_\lambda$, and thus belongs to $\BB_{u_a}^{{u_a}(\lambda)}$. Since $u_a(\lambda)=\frac{u(\lambda)-a}{1-\bar a u(\lambda)}$, we have found, in the case $\alpha= u(\lambda)$,  a nonscalar operator in the class $\BB_u^\alpha$ whose Crofoot transform is in  $\BB_{u_a}^\beta$, with $\beta=\frac{\alpha-a}{1-\bar a \alpha}$. By Theorem~\ref{th:sedlock} (ii) the same must then be true for the whole class.

The result is thus proved for points  in  $u(\bbD)$; since $u$ is inner, this is a dense set in $\bar\bbD$ (see, for instance,~\cite[Theorem 6.6]{Ga}). For $\alpha\in\bar\bbD$ outside this set, choose some $w\in\bbD$ such that, if $\phi=k^u_w+\alpha\overline{S_u\tilde k^u_w}$, then $\To_\phi$ is not scalar. Take a sequence $\alpha_n\to\alpha$, $\alpha_n\in u(\bbD)$; then $\phi_n=k^u_w+\alpha_n\overline{S_u\tilde k^u_w}$ tend uniformly to $\phi$, and therefore $\To_{\phi_n}\to \To_\phi$, $J\To_{\phi_n}J^*\to J\To_\phi J^*$. We have  $\To_{\phi_n}\in \BB_u^{\alpha_n}$ and $\To_\phi\in \BB_u^\alpha$ by the definition of the Sedlock classes. Since $\alpha_n\in u(\bbD)$, $J\To_{\phi_n}J^*\in \BB_{u_a}^{\beta_n}$, with $\beta_n:=\frac{\alpha_n-a}{1-\bar a \alpha_n}\to \beta:=\frac{\alpha-a}{1-\bar a \alpha} $. Applying Lemma~\ref{le:sedlock continuity}, it follows that $J\To_\phi J^*\in \BB_{u_a}^\beta$. 
So again we have found a nonscalar operator in $\BB_u^\alpha$, whose Crofoot transform is in $\BB_{u_a}^\beta$ with $\beta=\frac{\alpha-a}{1-\bar a \alpha}$, and by Theorem~\ref{th:sedlock} (ii) the same must be true for the whole class.

Finally, if $|\alpha|>1$, then $\alpha'=1/\bar\alpha\in\bbD$, and, if $\beta'=\frac{\alpha'-a}{1-\bar a \alpha'}$, then $1/\overline{\beta'}=\beta$. Therefore, using the result already proved for $\alpha'$ and  Theorem~\ref{th:sedlock} (iii), we obtain
\[
J\BB^\alpha_u J^*= J(\BB^{\alpha'}_u)^*J^*
=\big(J\BB^{\alpha'}_uJ^*\big)^* = (\BB^{\beta'}_{u_a})^*=\BB^\beta_{u_a},
\]
thus ending the proof of the theorem.
\end{proof}

Note that the particular case $a=\alpha$ appears in~\cite[Section 6]{Se}.
We will only use  the Crofoot transform obtained by taking $a=u(0)$; in this case $u_a(0)=0$.

 \section{Basic commutation formulas}\label{se:basic u(0)=0}

 In this section  the inner function $u$ is subjected to the condition $u(0)=0$. Then $u=zu_1$, $k^u_0=\1$ (the constant function equal to 1), and $\tilde  k^u_0=u_1$; also, we have the direct sum decompositions
\begin{align}
 \Ku&=\bbC \1\oplus z\Ksp{u_1},\label{eq:decomposition1}\\ (u H^2 +\overline{u H^2})^\perp&=
 \overline{\Ku}+\Ku=
 \overline{ z\Ksp{u_1}}\oplus \bbC \1\oplus z\Ksp{u_1}.\label{eq:decomposition2}
\end{align}

 Any TTO has a unique symbol $\phi\in (u H^2 +\overline{u H^2})^\perp$, and according to~\eqref{eq:decomposition2} we may write 
 \begin{equation}\label{eq:decomposition of phi}
  \phi=\phi_++\bar \phi_-+\phi_0
 \end{equation}
with $\phi_\pm\in z\Ksp{u_1}$ and $\phi_0\in\bbC$. Whenever $u(0)=0$,  the operator $\To_\phi$ will have the symbol  $\phi$ in $\overline{\Ku}+\Ku$, and we will consistently use  the decomposition~\eqref{eq:decomposition of phi}. Note that $(\To_\phi)^*=\To_{\bar \phi}$, and $(\bar \phi)_\pm=\phi_\mp$, $(\bar \phi)_0=\overline{\phi_0}$.

We define a conjugation $\breve{}$ on $z\Ksp{u_1}$, that we will call the \emph{reduced conjugation}, by transporting the conjugation on $\Ksp{u_1}$; that is, for $f\in z\Ksp{u_1}$,
\begin{equation}\label{eq:def of involution}
 \breve f=z\bar f u_1.
\end{equation}

The Sedlock classes can be easily identified in terms of $\phi_\pm$; namely, $\To_\phi\in\BB_u^\alpha$ if and only if $\breve\phi_-=\alpha \phi_+$.

Finally, let us note the formulas
\begin{equation}\label{eq:defect formulas}
 \Delta(I)=I-S_uS_u^*=\1\otimes\1, \qquad I-S_u^*S_u= u_1\otimes u_1.
\end{equation}

The next is the correspondent of~\cite[Lemma 2.3]{GP}.

 \begin{lemma}\label{le:basic lemma} Suppose $u(0)=0$.
If $\To_\phi, \To_\psi\in\TT_u$, then
\[
\begin{split}
 \Delta(\To_\phi\To_\psi)&= \phi_+\otimes\psi_- - \breve\phi_-\otimes \breve \psi_+\\
 &\qquad+(\To_\phi \psi_++\psi_0 \phi_+ +\phi_0\psi_0\1)\otimes \1 
 +\1\otimes (S_u (\To_\psi)^* S_u^* \phi_-+\bar\phi_0 \psi_-).
\end{split}
\]
 \end{lemma}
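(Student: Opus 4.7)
The plan is to derive a general Leibniz-type product rule for $\Delta$ and then specialize it using the characterization of Lemma~\ref{le:characterization of TTOs}. First, inserting $I = S_u^* S_u + (u_1 \otimes u_1)$ (via~\eqref{eq:defect formulas}) between $A$ and $B$ in $S_u A B S_u^*$ yields
\[
S_u A B S_u^* = (A - \Delta(A))(B - \Delta(B)) + S_u A (u_1 \otimes u_1) B S_u^*,
\]
from which the product rule
\[
\Delta(AB) = A \Delta(B) + \Delta(A) B - \Delta(A) \Delta(B) - S_u A (u_1 \otimes u_1) B S_u^*
\]
follows for arbitrary bounded operators $A, B$ on $\Ku$.

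Next, I specialize to $A = \To_\phi$, $B = \To_\psi$. Since $u(0)=0$ gives $k^u_0 = \1$, Lemma~\ref{le:characterization of TTOs} lets us write $\Delta(\To_\phi) = \phi_+ \otimes \1 + \1 \otimes (\phi_- + \bar\phi_0 \1)$ and analogously for $\Delta(\To_\psi)$. The terms $A \Delta(B)$, $\Delta(A) B$, $\Delta(A) \Delta(B)$ then expand mechanically using $A(f \otimes g) = (Af) \otimes g$, $(f \otimes g) B = f \otimes (B^* g)$, and $(f_1 \otimes g_1)(f_2 \otimes g_2) = \<f_2, g_1\> f_1 \otimes g_2$, together with the auxiliary computation $\To_\phi \1 = P_{\Ku} \phi = \phi_+ + \phi_0 \1$ (the $\bar\phi_-$ piece vanishes under $P_{\Ku}$ because its Fourier support lies in $\bbZ_{<0}$).

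The correction term $S_u A (u_1 \otimes u_1) B S_u^*$ rewrites as $(S_u \To_\phi u_1) \otimes (S_u \To_{\bar\psi} u_1)$, and the crucial technical identity is $S_u \To_\phi u_1 = \breve\phi_-$. To prove it I would split $\phi = \phi_+ + \bar\phi_- + \phi_0$ and verify the pieces: $\To_{\phi_+} u_1 = 0$ since $\phi_+ u_1 = h u \in u H^2$ when $\phi_+ = z h$ with $h \in \Ksp{u_1}$; $\To_{\bar\phi_-} u_1 = \tilde g$ where $\phi_- = z g$, by a one-line computation using $\tilde g = u_1 \bar z \bar g$; $S_u u_1 = 0$, immediate from~\eqref{eq:defect formulas}; and $S_u \tilde g = \breve\phi_-$ straight from~\eqref{eq:def of involution}. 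Applied to $\bar\psi$ in place of $\phi$ (where $(\bar\psi)_- = \psi_+$), the same identity gives $S_u \To_{\bar\psi} u_1 = \breve\psi_+$, producing the $-\breve\phi_- \otimes \breve\psi_+$ term of the statement.

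Assembling the four pieces, the result matches the claim except for two residual discrepancies: the $\1 \otimes (\cdot)$ slot carries $\To_\psi^* \phi_-$ rather than $S_u \To_\psi^* S_u^* \phi_-$, and a stray scalar term $-\<\psi_+, \phi_-\>(\1 \otimes \1)$ remains from $\Delta(A) \Delta(B)$. Both are reconciled at once by writing $\To_\psi^* = S_u \To_\psi^* S_u^* + \Delta(\To_\psi^*)$ and computing $\Delta(\To_\psi^*)\phi_- = \overline{\<\psi_+, \phi_-\>}\, \1$ from the characterization of $\Delta(\To_{\bar\psi})$; placed in the second tensor slot, this contributes precisely $+\<\psi_+, \phi_-\>(\1 \otimes \1)$ (conjugate-linearity in the second slot removes the overline), cancelling the stray term. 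The main obstacle is not conceptual but combinatorial: one must carefully track the scalars $\phi_0, \psi_0$ through the non-unique split of $\Delta(\To_\phi)$ and the conjugate-linearity in the second tensor slot, and recognize the subtle $\<\psi_+, \phi_-\>$ cancellation at the end.
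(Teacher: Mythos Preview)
Your argument is correct and is essentially the paper's proof, with only cosmetic reorganization. The paper writes the product rule in the equivalent form $\Delta(AB)=A\Delta(B)+\Delta(A)\,S_uBS_u^*-S_uA(u_1\otimes u_1)BS_u^*$ (your version follows by expanding $S_uBS_u^*=B-\Delta(B)$), and it strips off the constants $\phi_0,\psi_0$ at the outset rather than carrying them through; this is why the paper gets $S_u(\To_\psi)^*S_u^*\phi_-$ directly and never sees your stray $\langle\psi_+,\phi_-\rangle$ term, whereas you recover the same expression by one extra application of $\Delta(\To_{\bar\psi})$ at the end. The key technical identity $S_u\To_\phi u_1=\breve\phi_-$ and its derivation are identical in both proofs.
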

 
 \begin{proof}
  Denote $\hat \phi=\phi-\phi(0)$, $\hat \psi=\phi-\psi(0)$. We have 
  \[
   \Delta(\To_\phi\To_\psi)= \Delta(\To_{\hat\phi}\To_{\hat\psi})+\psi_0 \Delta(\To_{\hat\phi}) + \phi_0 \Delta(\To_{\hat\psi}) +\phi_0 \psi_0 (\1\otimes\1).
 \]
By Lemma~\ref{le:characterization of TTOs}, we have
\begin{equation}\label{eq:delta0}
 \Delta(\To_{\hat\phi})=\phi_+\otimes\1 +\1\otimes \phi_-,\qquad 
 \Delta(\To_{\hat\psi})=\psi_+\otimes\1 +\1\otimes \psi_-,
\end{equation}
and therefore
\begin{equation}\label{eq:delta1}
 \Delta(\To_\phi\To_\psi)= \Delta(\To_{\hat\phi}\To_{\hat\psi})
 +(\psi_0\phi_+ +\phi_0 \psi_+ +\phi_0 \psi_0 \1)\otimes \1+
 \1\otimes (\bar\psi_0\phi_-+\bar\phi_0 \psi_-).
 \end{equation}

 Now, using~\eqref{eq:defect formulas} and~\eqref{eq:delta0},
 \begin{align*}
  \Delta(\To_{\hat\phi}\To_{\hat\psi})&= 
  \To_{\hat\phi}\To_{\hat\psi} -S_u\To_{\hat\phi}\To_{\hat\psi}S_u^*\\
 & =
   \To_{\hat\phi}\To_{\hat\psi}-\To_{\hat\phi} S_u \To_{\hat\psi} S_u^*
   + \To_{\hat\phi} S_u \To_{\hat\psi} S_u^* - 
   S_u \To_{\hat\phi} (S_u^*S_u+u_1\otimes u_1) \To_{\hat\psi} S_u^*\\
   &= \To_{\hat\phi} \Delta(\To_{\hat\psi}) +
   \Delta(\To_{\hat\phi}) S_u \To_{\hat\psi} S_u^* 
   -S_u \To_{\hat\phi} (u_1\otimes u_1)  \To_{\hat\psi} S_u^*\\
  & = \To_{\hat\phi} (\psi_+\otimes\1 +\1\otimes \psi_-)+
  ( \phi_+\otimes\1 +\1\otimes \phi_-) S_u \To_{\hat\psi} S_u^* -
  (S_u \To_{\hat\phi}u_1\otimes S_u (\To_{\hat\psi})^* u_1)
  \end{align*}
We have $\To_{\hat\phi}\1=\phi_+$, $S_u^*\1=0$, so the sum of the first two terms on the last line is
\[
 \To_{\hat\phi} \psi_+\otimes \1+ \phi_+\otimes\psi_-+ \1\otimes S_u(\To_{\hat\psi})^* S_u^*\phi_-.
\]
Further, $\To_{\hat\phi}u_1=P_\Ku \hat\phi u_1= P_\Ku \phi_+u_1+ P_\Ku\bar\phi_-u_1$. Since $\phi_+\in z\Ksp{u_1}$, $\phi_+u_1$ has $zu_1=u$ as a factor, and thus is orthogonal to $\Ku$. Also, $\bar\phi_-u_1=\bar z z\bar\phi_-u_1=\bar z\breve\phi_-$, and  $\breve\phi_-\in z \Ksp{u_1}$ implies $\bar z\breve\phi_-\in\Ku$, whence 
$\To_{\hat\phi}u_1= \bar z\breve\phi_-$. Therefore $S_u \To_{\hat\phi}u_1=P_\Ku \breve\phi_-=\breve\phi_-$.

Taking into account the relation $(\To_{\hat \psi})^*=\To_{\bar{\hat \psi}}=\To_{\psi_-+\bar\psi_+}$, a similar computation yields $S_u (\To_{\hat\psi})^*u_1=\breve\psi_+$. Therefore
\begin{equation}\label{eq:delta2}
\Delta(\To_{\hat\phi}\To_{\hat\psi})=
 \To_{\hat\phi} \psi_+\otimes \1+ \phi_+\otimes\psi_-+ \1\otimes S_u(\To_{\hat\psi})^* S_u^*\phi_--
 \breve \phi_-\otimes\breve\psi_+.
\end{equation}
Gathering~\eqref{eq:delta1} and~\eqref{eq:delta2} ends the proof of the lemma.
\end{proof}
 
 From here follows the basic theorem, which corresponds to~\cite[Theorem 3.1]{GP}.
 
 \begin{theorem}\label{th:basic theorem}
  Suppose $u(0)=0$ and $\To_\phi, \To_\psi, \To_\zeta, \To_\eta \in\TT_u$. Then $\To_\phi \To_\psi- \To_\zeta \To_\eta \in\TT_u$ if and only if
  \begin{equation}\label{eq:basic condition}
   \phi_+\otimes \psi_- - \breve \phi_-\otimes \breve\psi_+=
   \zeta_+\otimes\eta_- - \breve \zeta_- \otimes \breve\eta_+.
  \end{equation}
 \end{theorem}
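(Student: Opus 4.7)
My approach is to combine the characterization of TTOs from Lemma~\ref{le:characterization of TTOs} with the explicit formula for $\Delta$ in Lemma~\ref{le:basic lemma}, and then exploit the orthogonal decomposition $\Ku=\bbC\1\oplus z\Ksp{u_1}$ from~\eqref{eq:decomposition1}, which is available because $u(0)=0$.

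Since $u(0)=0$ gives $k^u_0=\1$, Lemma~\ref{le:characterization of TTOs} says that $\To_\phi\To_\psi-\To_\zeta\To_\eta$ belongs to $\TT_u$ if and only if the image of this difference under $\Delta$ has the form $v\otimes\1+\1\otimes w$ with $v,w\in\Ku$. Applying Lemma~\ref{le:basic lemma} to both products and subtracting, the two ``tame'' rank-two summands $X\otimes\1+\1\otimes Y$ coming from each product combine into an operator that is already of the required form. The only remaining, ``Hankel-type'' piece is
\[
T:=(\phi_+\otimes\psi_- - \breve\phi_-\otimes\breve\psi_+) - (\zeta_+\otimes\eta_- - \breve\zeta_-\otimes\breve\eta_+).
\]
So the task reduces to proving that $T\in\Ku\otimes\1+\1\otimes\Ku$ if and only if $T=0$, which is precisely~\eqref{eq:basic condition}.

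For the nontrivial direction I would use a short projection trick. Let $P$ be the orthogonal projection of $\Ku$ onto $z\Ksp{u_1}$, so $P\1=0$. On one hand, every tensor factor appearing in $T$ lies in $z\Ksp{u_1}$: the elements $\phi_+,\psi_-,\zeta_+,\eta_-$ by the decomposition~\eqref{eq:decomposition of phi}, and $\breve\phi_-,\breve\psi_+,\breve\zeta_-,\breve\eta_+$ because the reduced conjugation defined in~\eqref{eq:def of involution} carries $z\Ksp{u_1}$ onto itself (one checks that $\breve{(zg)}=z\tilde g$ for $g\in\Ksp{u_1}$). Using the elementary identity $P(f\otimes g)P=Pf\otimes Pg$, one therefore has $PTP=T$. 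On the other hand, the same identity gives $P(v\otimes\1)P=Pv\otimes P\1=0$ and $P(\1\otimes w)P=P\1\otimes Pw=0$. Hence if $T=v\otimes\1+\1\otimes w$, then $T=PTP=0$, as required. The converse is immediate: if $T=0$ then $\Delta(\To_\phi\To_\psi-\To_\zeta\To_\eta)$ is left with only the tame terms, so the difference lies in $\TT_u$.

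I do not foresee a serious obstacle: once Lemma~\ref{le:basic lemma} is invoked, everything reduces to the projection trick above. The only bookkeeping worth double-checking is that the tame summands in Lemma~\ref{le:basic lemma} really have their ``free'' factor in $\Ku$ (so that they contribute to $\Ku\otimes\1+\1\otimes\Ku$) and that $z\Ksp{u_1}$ is stable under $\breve{}$; both are routine.
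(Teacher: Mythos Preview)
Your proposal is correct and follows essentially the same route as the paper: both reduce, via Lemmas~\ref{le:characterization of TTOs} and~\ref{le:basic lemma}, to showing that an operator $T$ whose tensor factors all lie in $z\Ksp{u_1}$ can equal something of the form $v\otimes\1+\1\otimes w$ only when $T=0$. The paper phrases this as a $2\times 2$ block-matrix argument with respect to~\eqref{eq:decomposition1}, whereas your compression $T\mapsto PTP$ is exactly the extraction of the lower-right block, so the two arguments coincide.
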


 \begin{proof}
  By Lemma~\ref{le:basic lemma}, there exist $f,g\in \Ku$ such that
  \[
   \Delta(\To_\phi \To_\psi- \To_\zeta \To_\eta)= \phi_+\otimes \psi_- - \breve \phi_-\otimes \breve\psi_+-
   \zeta_+\otimes\eta_- +  \breve \zeta_- \otimes \breve\eta_+ +
   f\otimes\1 +\1\otimes g.
  \]
From Lemma~\ref{le:characterization of TTOs} it follows that $\To_\phi \To_\psi- \To_\zeta \To_\eta \in\TT_u$ if and only if there exist $f_1,g_1\in\Ku$ such that
\begin{equation}\label{eq:intermediate condition}
 \phi_+\otimes \psi_- - \breve \phi_-\otimes \breve\psi_+-
   \zeta_+\otimes\eta_- +  \breve \zeta_- \otimes \breve\eta_+=
    f_1\otimes\1 +\1\otimes g_1.
\end{equation}

Now, if we consider the orthogonal decomposition~\eqref{eq:decomposition1}, we can write operators on $\Ku$ as $2\times 2$ block matrices. With respect to this decomposition, the left hand side of~\eqref{eq:intermediate condition} has zeros on the first row and column, since $\phi_\pm, \psi_\pm, \zeta_\pm, \eta_\pm\in z\Ksp{u_1}$. Meanwhile,
 the right hand side is the general form of an operator that has zeros in the lower right corner. It follows that both sides have to be zero, so, in particular,~\eqref{eq:basic condition} is true.
 \end{proof}

 \section{Main results}
 
 As noticed above, the Crofoot classes have been introduced in connection with multiplication properties of TTOs, and the main result in this direction is Theorem~\ref{th:sedlock} (v). As a consequence, a characterization of unitary TTOs is obtained in~\cite{Se}. In the sequel we use  Theorem~\ref{th:basic theorem} in order to  improve that result (see Theorem~\ref{th:unitary} below), as well as to obtain complete descriptions of other classes of TTOs.
 
 The first result discusses commuting TTOs.

 \begin{theorem}\label{th:commutation}
 Let $u$ be an inner function.
  If $\To_\phi, \To_\psi\in \TT_u$, then the following are equivalent:
  
  {\rm(i)} $\To_\phi \To_\psi=\To_\psi \To_\phi$.
 
 \smallskip
  {\rm(ii)} $\To_\phi \To_\psi-\To_\psi \To_\phi\in \TT_u$.
 
 \smallskip
  {\rm(iii)} One of the following is true:
 \begin{itemize}
  \item[(1)] There exists $\alpha\in\hat\bbC$ such that $\To_\phi$ and $\To_\psi$ both belong to $\BB^\alpha_u$.
  \item[(2)] The operators $I, \To_\phi, \To_\psi$ are not linearly independent.
 \end{itemize}
 \end{theorem}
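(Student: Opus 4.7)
The implications (i) $\Rightarrow$ (ii) and (iii) $\Rightarrow$ (i) are immediate: (i) $\Rightarrow$ (ii) holds since $0 \in \TT_u$; (iii)(1) $\Rightarrow$ (i) is Theorem~\ref{th:sedlock}(i); and (iii)(2) $\Rightarrow$ (i) follows because linear dependence of $\{I, \To_\phi, \To_\psi\}$ exhibits one of $\To_\phi, \To_\psi$ as a scalar affine combination of the other and the identity, which commutes trivially. For the main implication (ii) $\Rightarrow$ (iii), the plan is first to reduce to the case $u(0)=0$ via the Crofoot transform $J = J(u, u(0))$: conjugation by this unitary preserves commutation, preserves membership in $\TT_u$ by~\eqref{eq:crofoot1}, preserves linear dependence of $\{I, \To_\phi, \To_\psi\}$, and sends each Sedlock class to another by Theorem~\ref{le:crofoot}.

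Henceforth assume $u(0)=0$ and set $a = \phi_+$, $b = \breve\phi_-$, $c = \psi_+$, $d = \breve\psi_- \in z\Ksp{u_1}$. Applying Theorem~\ref{th:basic theorem} with $\zeta = \psi$ and $\eta = \phi$, the hypothesis rearranges to the symmetric operator identity
\begin{equation*}
a \otimes \breve d + d \otimes \breve a = b \otimes \breve c + c \otimes \breve b. \tag{$*$}
\end{equation*}
Evaluating $(*)$ at $\breve x$ and using $\<\breve u, \breve v\> = \<v, u\>$ produces, for every $x \in z\Ksp{u_1}$, the equivalent vector identity
\begin{equation*}
\<d, x\> a + \<a, x\> d = \<c, x\> b + \<b, x\> c. \tag{$**$}
\end{equation*}
A convenient non-degeneracy lemma is at hand: writing $B(f, g) := f \otimes \breve g + g \otimes \breve f$, the equation $B(f, g) = 0$ forces $f = 0$ or $g = 0$, since applying $B(f, g)$ to $\breve f$ yields $\<g, f\> f + \|f\|^2 g = 0$, so $g$ is a multiple of $f$ when $f \neq 0$, and substituting back then forces the multiple to vanish.

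The analysis now splits according to the linear relations between $a$ and $c$. If $a = 0$ (symmetrically $c = 0$), then $(*)$ reduces to $B(b, c) = 0$: either $b = 0$, in which case $\To_\phi$ is scalar and (iii)(2) holds; or $c = 0$, in which case both operators have antiholomorphic symbols and both lie in $\BB^\infty_u$, yielding (iii)(1). If $a, c$ are both nonzero and linearly dependent, write $c = \gamma a$; then $(*)$ collapses to $B(a, d - \gamma b) = 0$, forcing $d = \gamma b$. Translating through $\breve$, the equalities $c = \gamma a$ and $d = \gamma b$ read $\psi_+ = \gamma\phi_+$ and $\psi_- = \bar\gamma\phi_-$, so $\psi - \gamma\phi$ has vanishing $+$ and $-$ parts and is therefore a constant; hence $\To_\psi - \gamma\To_\phi$ is a scalar multiple of $I$, and (iii)(2) holds.

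The remaining case, where $a$ and $c$ are linearly independent, is the heart of the argument. Choose dual vectors $a^*, c^* \in z\Ksp{u_1}$ with $\<a, a^*\> = \<c, c^*\> = 1$ and $\<c, a^*\> = \<a, c^*\> = 0$; they exist because the Gram matrix of $\{a, c\}$ is invertible. Substituting $x = a^*$ in $(**)$ gives $d = -\<d, a^*\> a + \<b, a^*\> c$; taking the inner product with $a^*$ forces $\<d, a^*\> = 0$, so $d = \<b, a^*\> c$. Substituting $x = c^*$ symmetrically gives $b = \<d, c^*\> a$. Setting $\alpha := \<b, a^*\>$, we read off $d = \alpha c$ and then $\<d, c^*\> = \alpha$, so $b = \alpha a$ as well. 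Equivalently $\breve\phi_- = \alpha\phi_+$ and $\breve\psi_- = \alpha\psi_+$, placing both $\To_\phi$ and $\To_\psi$ in $\BB^\alpha_u$, which is (iii)(1). The dual basis substitution is what turns the operator identity $(*)$ into two transparent vector equations that directly produce the Sedlock parameter; all preceding steps are routine consequences of the non-degeneracy of $B$.
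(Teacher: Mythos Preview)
Your proof is correct. The reduction via the Crofoot transform, the use of Theorem~\ref{th:basic theorem}, and all the linear-algebra computations check out (in particular, the antiunitarity $\<\breve u,\breve v\>=\<v,u\>$ justifies the passage from $(*)$ to $(**)$, and the biorthogonal vectors $a^*,c^*$ exist in $\mathrm{span}\{a,c\}\subset z\Ksp{u_1}$ because the Gram matrix of $\{a,c\}$ is invertible).

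Your route, however, differs from the paper's. The paper keeps the identity in its original unsymmetrized form
\[
\phi_+\otimes\psi_--\breve\phi_-\otimes\breve\psi_+=\psi_+\otimes\phi_--\breve\psi_-\otimes\breve\phi_+
\]
and splits by the \emph{rank} of this operator. When the rank is at most one, the pair $\{\phi_+,\breve\phi_-\}$ is linearly dependent, say $\breve\phi_-=\alpha\phi_+$, and a short computation forces $\breve\psi_-=\alpha\psi_+$ as well, giving the Sedlock-class conclusion. When the rank is two, the ranges on both sides coincide, so $\{\psi_+,\breve\psi_-\}$ lies in $\mathrm{span}\{\phi_+,\breve\phi_-\}$; substituting the change-of-basis matrix back into the identity forces it to be scalar, yielding the linear-dependence conclusion. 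You instead symmetrize the identity into $B(a,d)=B(b,c)$, establish the non-degeneracy of $B$, and split by the linear relation between $a=\phi_+$ and $c=\psi_+$: dependence of $a,c$ leads to~(iii)(2), independence leads via the dual-basis substitution to~(iii)(1). The two case decompositions are genuinely different (for instance, a pair of operators in the same $\BB^\alpha_u$ with $\phi_+,\psi_+$ independent falls in the paper's rank~$\le 1$ case but in your independent case), yet both are complete. Your symmetrized form and the lemma on $B$ make the degenerate subcases especially clean; the paper's rank dichotomy is perhaps more geometrically motivated. The underlying linear algebra is of the same depth in both arguments.
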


 \begin{proof}
  It is obvious that (i)$\implies$(ii). For (iii)$\implies$(i), in case (1) commutativity follows from Sedlock's result, while in case (2) one of the TTOs is a linear combination of the identity and the other. So we are left to prove that   (ii)$\implies$(iii).
  
  Both conditions (ii) and (iii) are invariant if we apply a Crofoot transform: since the transform is unitary, this is obvious for (iii)(2). For (ii) it follows from~\eqref{eq:crofoot1}, while for (iii)(1) it is a consequence of Lemma~\ref{le:crofoot}. So we may assume for the rest of the proof that $u(0)=0$, and thus apply the results from Section~\ref{se:basic u(0)=0}.

  Assume then that $\To_\phi \To_\psi-\To_\psi \To_\phi\in \TT_u$. Applying Theorem~\ref{th:basic theorem} with $\eta=\phi$ and $\zeta=\psi$, formula~\eqref{eq:basic condition} becomes
   \begin{equation}\label{eq:basic commutation}
   \phi_+\otimes \psi_- - \breve \phi_-\otimes \breve\psi_+=
   \psi_+\otimes\phi_- - \breve \psi_- \otimes \breve\phi_+.
  \end{equation}
 The operators on the two sides of this equality  have rank at most two. If the rank is at most one, then  $\{\phi_+, \breve\phi_-\}$ and $\{\psi_+, \breve\psi_-\}$ are both pairs of linearly dependent functions. Suppose, for instance, that $\phi_-\not=0$ and $\breve\phi_-=\alpha\phi_+$. Then~\eqref{eq:basic commutation} yields
 \[
  \phi_+\otimes (\psi_-- \bar\alpha\breve\psi_+)=
  (\bar\alpha\psi_+-\breve\psi_-)\otimes\breve\phi_+.
 \]
The equality of the rank one operators implies the existence of $a\in\bbC$ such that 
\[
 \psi_--\bar\alpha\breve\psi_+=a\breve\phi_+,\qquad \bar\alpha\psi_+-\breve\psi_-=\bar a \phi_+.
\]
Applying the reduced conjugation to the first equation and comparing the result to the second, we see that $a=0$. Thus $\breve\psi_-=\alpha\psi_+$, and thus $\To_\phi$ and $\To_\psi$  both belong to  $\BB_u^{\alpha}$; that is, (1) is true.

Suppose now that the rank of the operators in~\eqref{eq:basic commutation} is two. The spaces spanned by $\{\phi_+, \breve\phi_-\}$ and by $\{\psi_+, \breve\psi_-\}$ are equal, and thus there exist $a_{11}, a_{12}, a_{21}, a_{22}\in\bbC$ such that
\[
 \psi_+= a_{11}\phi_+ +a_{12}\breve\phi_-, \qquad \breve\psi_-= a_{21}\phi_+ +a_{22}\breve\phi_-.
\]
Replacing these formulas in~\eqref{eq:basic commutation} yields
\[
 \left[ 2 a_{21}\phi_+ +( a_{22}- a_{11})\breve\phi_-\right] \otimes \breve\phi_+ +
 \left[ ( a_{22}- a_{11})\phi_+ -2 a_{12}  \breve\phi_-\right]\otimes \phi_-=0,
\]
and then the linear independence of $\phi_+$ and $\breve\phi_-$ implies that $a_{12}= a_{21}=0$ and $a_{11}= a_{22}=a$. Thus $\phi_-= a\phi_+$, $\breve\psi_-= a\breve\phi_-$, 
$\psi_-=\bar a\phi_-$,
and
\[
 \To_\psi=\psi_0 I+\To_{\psi_++\bar\psi_-}=\psi_0 I +a\To_{\phi_++\bar\phi_-} 
 =a\To_\phi+(\psi_0-\phi_0)I.
\]
Therefore in this case (2) is satisfied. This ends the proof of the theorem.  
 \end{proof}
 
 One can obtain as a consequence the characterization of normal TTOs.
 
 \begin{theorem}\label{th:normal} 
 Let $u$ be an inner function.
 If $\To_\phi\in\TT_u$, then the following are equivalent:
 
 {\rm(i)} $\To_\phi\in\TT_u$ is normal.
 
 {\rm(ii)} $\To_\phi (\To_\phi)^*-(\To_\phi)^* \To_\phi\in \TT_u$.
 
 {\rm(iii)}
  One of the following is true:
  \begin{itemize}
   \item[(1)] There exists $\alpha\in\bbT$ such that $\To_\phi$ belongs to $\BB^\alpha_u$.
   \item[(2)] $\To_\phi$ is a linear combination of a selfadjoint TTO and the identity.
  \end{itemize}
 \end{theorem}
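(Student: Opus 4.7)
The plan is to apply Theorem~\ref{th:commutation} as a black box to the pair $\To_\phi, (\To_\phi)^*$, and then unpack the two conclusions. The implication (i)$\implies$(ii) is immediate, since a vanishing commutator lies in $\TT_u$. For (iii)$\implies$(i): in case~(1), $|\alpha|=1$ means $S_u^\alpha$ is unitary, so every element of $\BB_u^\alpha = \{\Phi(S_u^\alpha) : \Phi \in L^\infty(\mu_\alpha)\}$ is a bounded Borel function of a normal operator and hence normal; in case~(2), a linear combination of a selfadjoint operator and $I$ is obviously normal.

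For the substantive direction (ii)$\implies$(iii), Theorem~\ref{th:commutation} applied to $\To_\phi$ and $(\To_\phi)^*$ yields one of two alternatives. In the first, there exists $\alpha \in \hat\bbC$ with both $\To_\phi$ and $(\To_\phi)^*$ in $\BB_u^\alpha$. By Theorem~\ref{th:sedlock}(iii), membership of $(\To_\phi)^*$ in $\BB_u^\alpha$ is equivalent to $\To_\phi \in \BB_u^{1/\bar\alpha}$, so $\To_\phi \in \BB_u^\alpha \cap \BB_u^{1/\bar\alpha}$. Either $\alpha = 1/\bar\alpha$, which is precisely the condition $\alpha \in \bbT$ and gives (iii)(1); or $\alpha \neq 1/\bar\alpha$, in which case Theorem~\ref{th:sedlock}(ii) forces $\To_\phi \in \bbC I$, a trivial instance of (iii)(2).

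In the second alternative, $I, \To_\phi, (\To_\phi)^*$ are linearly dependent. If $\To_\phi$ is already scalar, we are in (iii)(2). Otherwise one may write $\To_\phi = \mu I + \lambda (\To_\phi)^*$ for some $\mu, \lambda \in \bbC$; taking adjoints and back-substituting produces $(1-|\lambda|^2)\To_\phi = (\mu + \lambda\bar\mu)I$. If $|\lambda| \neq 1$ then $\To_\phi$ is again scalar. If $|\lambda|=1$, write $\lambda = e^{2i\theta}$; the identity $\mu + \lambda\bar\mu = 0$ forces $e^{-i\theta}\mu = is$ for some real $s$, and then $B := e^{-i\theta}\To_\phi - (is/2)I$ is readily checked to satisfy $B^* = B$. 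Hence $\To_\phi = e^{i\theta}B + (is/2)e^{i\theta}I$ is a linear combination of a selfadjoint TTO and $I$, where $B \in \TT_u$ because $\TT_u$ is closed under taking adjoints and under adding scalar multiples of the identity.

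The only place requiring any real care is this final bookkeeping in the second alternative; the rest is a clean transcription of Theorem~\ref{th:commutation} together with Theorem~\ref{th:sedlock}(ii)--(iii).
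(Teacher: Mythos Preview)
Your proof is correct and follows essentially the same approach as the paper: apply Theorem~\ref{th:commutation} to the pair $\To_\phi$, $(\To_\phi)^*$ and then analyse each alternative using Theorem~\ref{th:sedlock}(ii)--(iii). The only cosmetic difference is in the bookkeeping for the second alternative: where you parametrise $\lambda=e^{2i\theta}$ and build the selfadjoint $B$ by hand, the paper simply observes that one of $\Re\To_\phi$, $\Im\To_\phi$ is nonscalar and solves for $\To_\phi$ in terms of it and $I$.
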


 \begin{proof}
  By applying Theorem~\ref{th:commutation} to the case $\psi=\bar\phi$, we obtain the equivalence of (i), (ii), and (iii${}'$), where (iii${'}$) states that one of the following is true:
 \begin{itemize}
 \item[$(1')$]  There exists $\alpha\in\bbC$ such that $\To_\phi$ and $(\To_\phi)^*$  both belong to $\BB^\alpha_u$.
 \item[$(2')$]  The operators $I, \To_\phi, (\To_\phi)^*$ are not linearly independent.
 \end{itemize}

If $\To_\phi$ is a multiple of the identity, then (1), (2), $(1')$, $(2')$ are all satisfied. Suppose this is not the case. If $\To_\phi\in\BB^\alpha_u$, then $(\To_\phi)^*\in\BB^{\bar\alpha^{-1}}$. If $(1')$ is true, then we must have $\bar\alpha^{-1}=\alpha$, or $|\alpha|=1$; thus (1) is equivalent to  $(1')$.

If (2) is true, then $\To_\phi=a A+bI$, with $A=A^*$ and $a\not=0$; then $(\To_\phi)^*=\frac{\bar a}{a}\To_\phi+\frac{a\bar b-\bar a b}{a} I$, and thus $(2')$ is true. 
Conversely, suppose $(\To_\phi)^*=c \To_\phi+dI$. Since we have asumed that $T_\phi$ is not a scalar, at least one of $\Re \To_\phi, \Im \To_\phi$ is not a scalar. Say this is $\Re \To_\phi$; then $\Re\To_\phi= (c+1)\To_\phi+dI$, with $c\not=-1$, and thus 
$\To_\phi=(c+1)^{-1}(\Re\To_\phi-dI$); therefore (2) is true.

Thus $(1)\Leftrightarrow(1')$ and $(2)\Leftrightarrow(2')$; this ends the proof of the theorem.
  \end{proof}

It is proved in~\cite{Se} that a TTO $\To_\phi$ is unitary if and only if it belongs to some class $\BB^\alpha_u$ for some $\alpha\in\bbT$. In this case $\To_\phi=\Phi(S^\alpha_u)$, where $|\Phi|=1$ $\mu_\alpha$-almost everywhere. With our method we can obtain a slight improvement of this result.

\begin{theorem}\label{th:unitary}
Let $u$ be an inner function.
 If $\To_\phi\in\TT_u$, then the following are equivalent:
 \begin{enumerate}
  \item $\To_\phi$ is unitary.
  \item $\To_\phi$ is an isometry.
  \item $\To_\phi$ is a coisometry.
  \item $(\To_\phi)^* \To_\phi-I\in \TT_u$.
  \item $\To_\phi (\To_\phi)^*-I\in \TT_u$.
  \item $\To_\phi\in\BB^\alpha_u$ for some $\alpha\in\bbT$, and $\To_\phi=\Phi(S^\alpha_u)$, where $|\Phi|=1$ $\mu_\alpha$-almost everywhere. 
 \end{enumerate}
\end{theorem}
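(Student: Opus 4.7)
The implications $(1) \Rightarrow (2), (3), (4), (5)$ are immediate. The equivalence $(1) \Leftrightarrow (6)$ combines Sedlock's characterization of unitary TTOs (recalled in the discussion preceding the theorem) with the standard fact that $\Phi(S_u^\alpha)$ is unitary precisely when $|\Phi|=1$ $\mu_\alpha$-almost everywhere, $S_u^\alpha$ being unitary for $\alpha \in \bbT$. The substantive content is therefore $(4) \Rightarrow (1)$; the implication $(5) \Rightarrow (1)$ then follows by taking adjoints and using Theorem~\ref{th:sedlock}(iii).

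Following the pattern of Theorems~\ref{th:commutation} and~\ref{th:normal}, I first apply the Crofoot transform $J(u, u(0))$ (Theorem~\ref{le:crofoot}) to reduce to the case $u(0) = 0$: hypothesis $(4)$ is invariant by~\eqref{eq:crofoot1} and unitarity of $J$, and the conclusion $(1)$ is invariant by unitary equivalence, while (6) is preserved because the induced M\"obius map sends $\bbT$ to $\bbT$. In the reduced setting I apply Theorem~\ref{th:basic theorem} with $\phi \mapsto \bar\phi$, $\psi \mapsto \phi$, and $\zeta = \eta = 1$ (so that $\To_\zeta \To_\eta = I$). Using $(\bar\phi)_\pm = \phi_\mp$, condition~\eqref{eq:basic condition} collapses to
\[
\phi_- \otimes \phi_- = \breve\phi_+ \otimes \breve\phi_+.
\]
Both sides are positive, rank-at-most-one operators; either both vanish—so $\phi_\pm = 0$ and $\To_\phi = \phi_0 I$ is a scalar—or they coincide as nonzero rank-one operators, forcing $\phi_- = c\,\breve\phi_+$ for some $c$ with $|c|=1$ (by comparing norms). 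Applying the antilinear, involutive reduced conjugation from~\eqref{eq:def of involution} to this identity yields $\breve\phi_- = \bar c\,\phi_+$, placing $\To_\phi$ in the Sedlock class $\BB_u^{\bar c}$ with $\bar c \in \bbT$.

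To finish, in the $\BB_u^{\bar c}$ case I write $\To_\phi = \Phi(S_u^{\bar c})$ with $\Phi \in L^\infty(\mu_{\bar c})$; since $S_u^{\bar c}$ is unitary, $(\To_\phi)^*\To_\phi = |\Phi|^2(S_u^{\bar c})$, and the isometric identity then forces $|\Phi|^2 = 1$ $\mu_{\bar c}$-a.e., which is precisely $(6)$. The scalar case is handled analogously with $\Phi \equiv \phi_0$ and $|\phi_0|=1$. I expect the main delicate point to be tracking the phase factor carefully when passing through the antilinear reduced conjugation to identify the correct Sedlock-class index $\bar c$, together with reading off the unimodularity of $\Phi$ from the $L^\infty$-functional calculus on the unitary operator $S_u^{\bar c}$; everything else is a matter of organizing the trivial implications around the single nontrivial extraction provided by Theorem~\ref{th:basic theorem}.
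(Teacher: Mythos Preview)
Your approach mirrors the paper's almost exactly: the paper proves $(5)\Rightarrow(6)$ directly (obtaining $\phi_+\otimes\phi_+=\breve\phi_-\otimes\breve\phi_-$ from Theorem~\ref{th:basic theorem}) and then derives $(4)\Rightarrow(6)$ by passing to the adjoint, whereas you prove $(4)\Rightarrow(6)$ directly (obtaining the conjugate relation $\phi_-\otimes\phi_-=\breve\phi_+\otimes\breve\phi_+$) and derive $(5)\Rightarrow(6)$ by adjoints. The reduction to $u(0)=0$ via the Crofoot transform and the application of Theorem~\ref{th:basic theorem} with $\zeta=\eta=\1$ are identical.

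There is, however, a genuine gap in your last paragraph. From (4) and Theorem~\ref{th:basic theorem} you correctly extract that $\To_\phi\in\BB_u^{\bar c}$ for some $\bar c\in\bbT$, hence $\To_\phi=\Phi(S_u^{\bar c})$ for some $\Phi\in L^\infty(\mu_{\bar c})$. But you then invoke ``the isometric identity'' to force $|\Phi|^2=1$ $\mu_{\bar c}$-a.e.; you do not have this identity---hypothesis (4) says only that $(\To_\phi)^*\To_\phi-I\in\TT_u$, not that it vanishes. Since $(|\Phi|^2-1)(S_u^{\bar c})$ lies in $\{S_u^{\bar c}\}'=\BB_u^{\bar c}\subset\TT_u$ for \emph{every} $\Phi\in L^\infty(\mu_{\bar c})$, condition (4) is automatically satisfied by every element of $\BB_u^{\bar c}$ and yields no further constraint on $\Phi$. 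The same problem appears in your scalar case: nothing in (4) forces $|\phi_0|=1$. Concretely, for any $\alpha\in\bbT$ the operator $2S_u^\alpha$ satisfies (4) and (5) (because $(2S_u^\alpha)^*(2S_u^\alpha)-I=3I\in\TT_u$) but is not unitary, so the implications $(4)\Rightarrow(1)$ and $(5)\Rightarrow(1)$ are in fact false as stated. The paper's own proof shares this gap at the sentence ``The particular form of $\To_\phi$ is then a consequence of the description of $\BB_u^\alpha$ in Section~\ref{se:sedlock}'': that description gives $\Phi\in L^\infty(\mu_\alpha)$, not $|\Phi|=1$. What the argument (yours and the paper's) actually proves is that (4) and (5) are each equivalent to the condition $\To_\phi\in\BB_u^\alpha$ for some $\alpha\in\bbT$.
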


\begin{proof}
 The implications (i)$\implies$(ii), (i)$\implies$(iii), (ii)$\implies$(iv), (iii)$\implies$(v), and 
 (vi)$\implies$(i) are all immediate.

To prove (v)$\implies$(vi), we may assume, as in the proof of Theorem~\ref{th:commutation}, that $u(0)=0$. We may then apply Theorem~\ref{th:basic theorem} to the case $\psi=\bar\phi$, $\zeta=\eta=\1$, which implies $\zeta_\pm=\eta_\pm=0$. We obtain then 
\[
 \phi_+\otimes\phi_+ =\breve\phi_-\otimes\breve\phi_-.
\]
Therefore there exists $\alpha\in\bbT$ such that $\breve\phi_-=\alpha\phi_+$; that is, $\To_\phi\in\BB^\alpha_u$. The particular form of $\To_\phi$ is then a consequence of the description of $\BB^\alpha_u$ in Section~\ref{se:sedlock}.

Finally, if (iv) is true, then (v) is true for $(\To_\phi)^*=\To_{\bar\phi}$. Therefore the previous paragraph yields $\To_{\bar\phi}\in\BB^\alpha_u$ for some $\alpha\in\bbT$, whence $\To_{\bar\phi}\in\BB^\alpha_u$. Thus (iv)$\implies$(vi), which ends the proof of the theorem. 
\end{proof}

In particular, there do not exist nonunitary isometries or coisometries in $\TT_u$. This can also be obtained as a consequence of the complex symmetry of  the truncated Toeplitz operators with respect to the conjugation on $\Ku$ (see~\cite{GPu}).
 
\section*{Acknowledgements}

The first author was partially supported by the ANR project ANR-09-BLAN-0058-01. The second author was partially supported by a grant of the Romanian National Authority for Scientific
Research, CNCS Ð UEFISCDI, project number PN-II-ID-PCE-2011-3-0119.

 \renewcommand{\bibname}{\sc}

 \end{document}